\newcommand{\bu}{\mathbf{u}}
\newcommand{\bff}{\mathbf{f}}
\newcommand{\bp}{\mathbf{p}}
\newcommand{\bv}{\mathbf{v}}
\newcommand{\wlam}{\widetilde{\lambda}}
\newcommand{\wm}{\widetilde{m}}
\newcommand{\wt}{\widetilde{t}}
\newcommand{\wT}{\widetilde{T}}
\newcommand{\weps}{\widetilde{\varepsilon}}
\newcommand{\wla}{\widetilde{\lambda}}
\newcommand{\bareps}{\bar{\varepsilon}}
\newcommand{\Span}{\mathrm{span}}
\newcommand{\Diag}{\mathrm{diag}}
\begin{document}
	
	\title*{On the asymptotic optimality of spectral coarse spaces}
	
	\author{Gabriele Ciaramella and Tommaso Vanzan}
	\authorrunning{G. Ciaramella and T. Vanzan}
	\institute{Gabriele Ciaramella \at Politecnico di Milano \email{gabriele.ciaramella@polimi.it}
		\and Tommaso Vanzan \at CSQI Chair, EPFL Lausanne \email{tommaso.vanzan@epfl.ch}}
	%
	%
	\maketitle
	
	\abstract*{The goal of this paper is concerned with the asymptotic optimality of
spectral coarse spaces for two-level iterative methods. Spectral coarse spaces, namely
coarse spaces obtained as the span of the slowest modes of the used one-level smoother,
are known to be very efficient and, in some cases, optimal. 
However, the results of this paper show that spectral coarse spaces do not
necessarily minimize the asymptotic contraction factor of a two-level iterative method.
Moreover, results of numerical experiments show that there exist coarse spaces that are
asymptotically more efficient and lead to preconditioned systems with improved conditioning
properties.}

\section{Introduction}\label{Ciaramella_mini_10_sec:intro}
\vspace*{-4mm}

The goal of this work is to study the asymptotic optimality of spectral
coarse spaces for two-level iterative methods. 
In particular, we consider a linear system $A \bu = \bff$, 
where $A \in \mathbb{R}^{n \times n}$ and $\bff \in \mathbb{R}^n$,
and a two-level method that, given an iterate $\bu^k$, computes the new vector $\bu^{k+1}$ as
\begin{align}
\bu^{k+1/2}&=G\bu^{k}+M^{-1}\bff, && \text{(smoothing step)} \label{Ciaramella_mini_10_eq:SM} \\
\bu^{k+1}&=\bu^{k+1/2}+ PA_c^{-1}R(\bff - A\bu^{k+1/2}). &&  \text{(coarse correction)}  \label{Ciaramella_mini_10_eq:CC}
\end{align}
The smoothing step \eqref{Ciaramella_mini_10_eq:SM} 
is based on the splitting $A=M-N$, where $M$ is the preconditioner,
and $G=M^{-1}N$ the iteration matrix.
The correction step \eqref{Ciaramella_mini_10_eq:CC} is characterized by 
prolongation and restriction matrices $P \in \mathbb{R}^{n \times m}$ and $R=P^\top$,
and a coarse matrix $A_c = RAP$. 
The columns
of $P$ are linearly independent vectors spanning the coarse space
$V_c := \Span \, \{ \bp_1 , \dots, \bp_m \}$.
The convergence of the one-level iteration \eqref{Ciaramella_mini_10_eq:SM}
is characterized by the eigenvalues of $G$,
$\lambda_j$, $j=1,\dots,n$ (sorted in descending order by magnitude).
The convergence of the two-level iteration 
\eqref{Ciaramella_mini_10_eq:SM}-\eqref{Ciaramella_mini_10_eq:CC}
depends on the spectrum of the iteration matrix $T$,
obtained by replacing \eqref{Ciaramella_mini_10_eq:SM} into
\eqref{Ciaramella_mini_10_eq:CC} and rearranging terms:

\vspace*{-2mm}
\begin{equation}\label{Ciaramella_mini_10_eq:2L}
T = [ I - P ( RAP)^{-1} R A ] G.
\end{equation}
\vspace*{-4mm}

\noindent
The goal of this short paper is to answer, though partially, to the fundamental question: 
{\bf given an integer $m$, what is the coarse space of dimension $m$ which minimizes the spectral radius $\rho(T)$?}
Since step \eqref{Ciaramella_mini_10_eq:CC} aims at correcting
the error components that the smoothing step \eqref{Ciaramella_mini_10_eq:SM} is 
not able to reduce (or eliminate), it is intuitive to think that an optimal
coarse space $V_c$ is obtained by defining $\bp_j$ as the eigenvectors of $G$ corresponding to the
$m$ largest (in modulus) eigenvalues. We call such a $V_c$ \textit{spectral coarse space}.
Following the idea of correcting the `badly converging' modes of $G$, several papers 
proposed new, and in some sense optimal, coarse spaces.
In the context of domain decomposition methods, we refer, e.g., to 
\cite{gander2014new,GHS2018,gander2019song}, where efficient coarse spaces have been designed for
parallel, restricted additive and additive Schwarz methods. 
In the context of multigrid methods, it is worth to mention the work
\cite{katrutsa2017deep}, where the interpolation weights
are optimized using an approach based on deep-neural networks.
Fundamental results are presented in \cite{xu_zikatanov_2017}: for a symmetric $A$,
it is proved that the coarse space of size $m$ that minimizes the energy norm of $T$,
namely $\| T \|_A$, is the span of the $m$ eigenvectors of $\overline{M}A$ corresponding to the $m$
lowest eigenvalues. Here, $\overline{M} := M^{-1} + M^{-\top} - M^{-\top}AM^{-1}$ is symmetric 
and assumed positive definite.
If $M$ is symmetric, a direct calculation gives
$\overline{M}A=2M^{-1}A-(M^{-1}A)^2$. Using that $M^{-1}A=I-G$, one can show 
that the $m$ eigenvectors associated to the lowest $m$ eigenvalues of $\overline{M}A$ correspond to the $m$ slowest modes of $G$.
Hence, the optimal coarse space proposed in \cite{xu_zikatanov_2017}
is a spectral coarse space.
The sharp result of \cite{xu_zikatanov_2017} provides a concrete optimal choice
of $V_c$ minimizing $\| T\|_A$. This is generally an upper bound for
the asymptotic convergence factor $\rho(T)$.
As we will see in Section \ref{Ciaramella_mini_10_sec:pert}, 
choosing the spectral coarse space, one gets $\rho(T)=|\lambda_{m+1}|$.
The goal of this work is to show that this is not necessarily the optimal asymptotic convergence factor.
In Section \ref{Ciaramella_mini_10_sec:pert}, we perform a detailed optimality analysis
for the case $m=1$. The asymptotic optimality of coarse spaces for $m\geq 1$ is studied numerically
in Section \ref{Ciaramella_mini_10_sec:opt}. Interestingly, we will see that by optimizing 
$\rho(T)$ one constructs coarse spaces that lead to preconditioned matrices with better condition numbers.

\vspace*{-2mm}
\section{A perturbation approach}\label{Ciaramella_mini_10_sec:pert}
\vspace*{-4mm}

Let $G$ be diagonalizable with eigenpairs $(\lambda_j,\bv_j)$, $j=1,\dots,n$.
Suppose that $\bv_j$ are also eigenvectors of $A$:
$A \bv_j = \wlam_j \bv_j$. Concrete examples where these hypotheses are fulfilled
are given in Section \ref{Ciaramella_mini_10_sec:opt}.
Assume that $\textrm{rank} \, P = m$ ($\textrm{dim}\, V_c=m$).
For any eigenvector $\bv_j$, 
we can write the vector $T \bv_j$ as

\vspace*{-3mm}
\begin{equation}\label{Ciaramella_mini_10_eq:wT}
T \bv_j = \sum_{\ell=1}^n \wt_{j,\ell} \bv_\ell, \: j=1,\dots,n.
\end{equation}
\vspace*{-2mm}

\noindent
If we denote by $\wT \in \mathbb{R}^{n \times n}$ the matrix of entries $\wt_{j,\ell}$, 
and define $V:=[\bv_1,\dots,\bv_n]$, then \eqref{Ciaramella_mini_10_eq:wT} becomes $TV=V\wT^\top$. 
Since $G$ is diagonalizable, $V$ is invertible, and thus
$T$ and $\widetilde{T}^\top$ are similar. Hence, $T$ and $\widetilde{T}$ have the same spectrum.
We can now prove the following lemma.

\vspace*{-1mm}
\begin{lemma}[Characterization of $\wT$]\label{Ciaramella_mini_10_lemma:2}
Given an index $\wm \geq m$ and assume that $V_c := \Span \, \{ \bp_1 , \dots, \bp_m \}$ satisfies
\begin{equation}\label{Ciaramella_mini_10_eq:ass1}
V_c \subseteq \Span\, \{ \bv_j \}_{j=1}^{\wm}
\text{ and } V_c \cap \{ \bv_j \}_{j=\wm+1}^n = \{ 0 \}.
\end{equation}
Then, it holds that
\begin{equation}\label{Ciaramella_mini_10_eq:wTT}
\begin{aligned}[c]
\begin{bmatrix}
\wT_{\wm} & 0 \\
X & \Lambda_{\wm}\\
\end{bmatrix},
\end{aligned}
\qquad
\begin{aligned}[c]
&\Lambda_{\wm} = \Diag\, (\lambda_{\wm+1},\dots,\lambda_{n}), \\
& \wT_{\wm} \in \mathbb{R}^{\wm \times \wm}, X \in \mathbb{R}^{(n-\wm) \times \wm}.
\end{aligned}
\end{equation}
\end{lemma}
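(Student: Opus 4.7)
The cleanest way I see to obtain this block structure is to rewrite the iteration operator as $T=(I-\Pi)G$ with $\Pi := P(RAP)^{-1}RA$. A one-line check, $\Pi^2 = P(RAP)^{-1}RAP(RAP)^{-1}RA = \Pi$, shows that $\Pi$ is an (oblique) projector whose range is contained in $V_c = \mathrm{range}(P)$. That is the only structural fact I really need.

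Next, I apply $T$ to the eigenvectors $\bv_j$ of $G$. Since $G\bv_j = \lambda_j \bv_j$, I get
\begin{equation*}
T\bv_j \;=\; (I-\Pi)G\bv_j \;=\; \lambda_j \bv_j \;-\; \lambda_j\, \Pi \bv_j,
\end{equation*}
and by the first clause of hypothesis \eqref{Ciaramella_mini_10_eq:ass1} the correction $\Pi\bv_j$ lies in $V_c \subseteq \Span\{\bv_1,\dots,\bv_{\wm}\}$ for every $j$.

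From here I just read off the entries $\wt_{j,\ell}$ defined by \eqref{Ciaramella_mini_10_eq:wT}. For $j\leq \wm$, both $\lambda_j\bv_j$ and $\lambda_j\Pi\bv_j$ sit in $\Span\{\bv_1,\dots,\bv_{\wm}\}$, so $T\bv_j$ has no component along $\bv_\ell$ for $\ell>\wm$; hence $\wt_{j,\ell}=0$ whenever $j\leq \wm < \ell$, which is exactly the zero top-right block of $\wT$. For $j>\wm$, the term $\lambda_j \Pi\bv_j$ still contributes only to the first $\wm$ eigendirections, while $\lambda_j \bv_j$ contributes $\lambda_j$ in the $\bv_j$-slot and nothing elsewhere; thus $\wt_{j,\ell}=\lambda_j\,\delta_{j,\ell}$ for all $\ell>\wm$, which is precisely the diagonal matrix $\Lambda_{\wm}$ in the bottom-right block. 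The remaining blocks $\wT_{\wm}$ and $X$ simply absorb the coefficients of $\lambda_j \bv_j - \lambda_j\Pi\bv_j$ along $\{\bv_1,\dots,\bv_{\wm}\}$ and carry no prescribed structure.

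I do not foresee any real obstacle: the argument is a direct computation once one factorizes $T$ as $(I-\Pi)G$ and exploits that $\mathrm{range}(\Pi)\subseteq V_c$. The second clause of \eqref{Ciaramella_mini_10_eq:ass1}, $V_c \cap \{\bv_j\}_{j=\wm+1}^n = \{0\}$, is only needed to guarantee that $V_c$ genuinely has dimension $m$ inside $\Span\{\bv_1,\dots,\bv_\wm\}$ (equivalently, that $P$ has full column rank and $RAP$ is well-defined as claimed), and does not otherwise enter the computation. Similarly, the standing assumption that the $\bv_j$ are also eigenvectors of $A$ is not used in this particular lemma; it is presumably exploited later in Section~\ref{Ciaramella_mini_10_sec:pert} to pin down the block $\wT_{\wm}$ more explicitly.
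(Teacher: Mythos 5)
Your proof is correct and takes essentially the same route as the paper's: the paper likewise observes that \eqref{Ciaramella_mini_10_eq:ass1} makes $\Span\,\{\bv_j\}_{j=1}^{\wm}$ invariant under $T$ (your observation that $\Pi\bv_j \in V_c \subseteq \Span\,\{\bv_j\}_{j=1}^{\wm}$), and for $j>\wm$ computes $T\bv_j = \lambda_j\bv_j - P(RAP)^{-1}RAG\bv_j$ exactly as you do, your factorization $T=(I-\Pi)G$ being only a cosmetic repackaging. One marginal correction to your closing remark: $\mathrm{rank}\,P = m$ is a separate standing assumption of Section~\ref{Ciaramella_mini_10_sec:pert} rather than a consequence of the second clause of \eqref{Ciaramella_mini_10_eq:ass1}, and in fact that second clause is already implied by the first (the $\bv_j$ form a basis, so no $\bv_j$ with $j>\wm$ can lie in $\Span\,\{\bv_j\}_{j=1}^{\wm}$); like the paper's proof, yours never actually uses it.
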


\begin{proof}
The hypothesis \eqref{Ciaramella_mini_10_eq:ass1}
guarantees that $\Span\, \{ \bv_j \}_{j=1}^{\wm}$ is invariant under the action of $T$. 
Hence, $T \bv_j \in \Span\, \{ \bv_j \}_{j=1}^{\wm}$ for $j=1,\dots,\wm$,
and, using \eqref{Ciaramella_mini_10_eq:wT}, one gets that
$\wt_{j,\ell}=0$ for $j=1,\dots,\wm$ and $\ell=\wm+1,\dots,n$.
Now, consider any $j>\wm$. A direct calculation using \eqref{Ciaramella_mini_10_eq:wT} reveals that
$T\bv_j = G\bv_j - P (RAP)^{-1}RAG \bv_j = \lambda_j \bv_j - \sum_{\ell=1}^{\wm} x_{j-\wm,\ell} \bv_{\ell}$, where $x_{i,k}$ are the elements of $X\in \mathbb{R}^{(n-\wm)\times\wm }$.
Hence, the structure \eqref{Ciaramella_mini_10_eq:wTT} follows.
\end{proof}

Notice that, if \eqref{Ciaramella_mini_10_eq:ass1} holds,
then Lemma \ref{Ciaramella_mini_10_lemma:2}
allows us to study the properties of $T$ using the matrix $\wT$ and
its structure \eqref{Ciaramella_mini_10_eq:wTT}, and hence $\wT_{\wm}$.

Let us now turn to the questions posed in Section \ref{Ciaramella_mini_10_sec:intro}.
Assume that $\bp_j=\bv_j$, $j=1,\dots,m$, namely $V_c = \Span \, \{ \bv_j\}_{j=1}^m$.
In this case, \eqref{Ciaramella_mini_10_eq:ass1} holds with $\wm = m$, and 
a simple argument\footnote{
Let ${\bf v}_j$ be an eigenvector of $A$
with $j \in \{1,\dots,m\}$. Denote
by ${\bf e}_j \in \mathbb{R}^n$ the $j$th canonical vector.
Since $P {\bf e}_j={\bf v}_j$,
$RAP {\bf e}_j = RA {\bf v}_j$. This is equivalent to 
${\bf e}_j = (RAP)^{-1} RA {\bf v}_j$, which gives
$T\bv_j = \lambda_j(\bv_j - P (RAP)^{-1}RA \bv_j) 
= \lambda_j( \bv_j - P {\bf e}_j) =0$.
}
leads to
$\wT_{\wm}=0$, 
$\wT = 
\begin{bmatrix}
0 & 0 \\
X & \Lambda_{\wm}\\
\end{bmatrix}$. The spectrum of $\wT$ is $\{0,\lambda_{m+1},\dots,\lambda_n\}$.
This means that $V_c \subset \textrm{kern} \, T$ and $\rho(T)=|\lambda_{m+1}|$.
Let us now perturb the coarse space $V_c$ using the eigenvector $\bv_{m+1}$, that is
$V_c(\varepsilon) := \Span \, \{ \bv_j + \varepsilon \, \bv_{m+1} \}_{j=1}^m$.
Clearly, $\text{dim}\, V_c(\varepsilon) = m$ for any $\varepsilon \in \mathbb{R}$.
In this case, \eqref{Ciaramella_mini_10_eq:ass1} holds with $\wm = m+1$ and 
$\wT$ becomes 
\begin{equation}\label{Ciaramella_mini_10_thm:TTTT}
\wT(\varepsilon) = 
\begin{bmatrix}
\wT_{\wm}(\varepsilon) & 0 \\
X(\varepsilon) & \Lambda_{\wm}\\
\end{bmatrix},
\end{equation}
where we make explicit the dependence on $\varepsilon$.
Notice that $\varepsilon=0$ clearly leads to 
$\wT_{\wm}(0)=\text{ diag}\, (0,\dots,0,\lambda_{m+1}) \in \mathbb{R}^{\wm \times \wm}$,
and we are back to the unperturbed case with $\wT(0)=\wT$ having spectrum $\{0,\lambda_{m+1},\dots,\lambda_n\}$. Now, notice that
$\min_{\varepsilon \in \mathbb{R}} \rho(\wT(\varepsilon)) \leq \rho(\wT(0)) = | \lambda_{m+1} |$.
Thus, it is natural to ask the question: is this inequality strict?
Can one find an $\weps \neq 0$ such that
$\rho(\wT(\weps))=\min_{\varepsilon \in \mathbb{R}} \rho(\wT(\varepsilon))<\rho(\wT(0))$
holds? If the answer is positive, then we can conclude that choosing the coarse vectors
equal to the dominating eigenvectors of $G$ is not an optimal choice.
The next key result shows that, in the case $m=1$, the answer is positive.

\begin{theorem}[Perturbation of $V_c$]\label{Ciaramella_mini_10_thm:perturb}
Let $(\bv_1,\lambda_1)$, $(\bv_2,\lambda_2)$ and $(\bv_3,\lambda_3)$ be three real eigenpairs of $G$, 
$G \bv_j = \lambda_j \bv_j$ such that
with $0<|\lambda_3|<|\lambda_2| \leq |\lambda_1|$ and $\| \bv_j \|_2 =1$, $j=1,2$. 
Denote by $\wla_j \in \mathbb{R}$ the eigenvalues of $A$ corresponding to $\bv_j$, and assume that
$\wla_1\wla_2>0$. Define
$V_c := \Span\,\{ \bv_1 + \varepsilon \bv_2 \}$ with $\varepsilon \in \mathbb{R}$, and
$\gamma := \bv_1^\top \bv_2 \in [-1,1]$. Then
\begin{itemize}\itemsep0em
\item[{\rm (A)}]$\,$ The spectral radius of $\wT(\varepsilon)$ is
$\rho(\wT(\varepsilon))=\max\{ |\lambda(\varepsilon,\gamma)| , | \lambda_3 | \}$, where
\begin{equation}\label{Ciaramella_mini_10_thm:lam}
\lambda(\varepsilon,\gamma) = \frac{\lambda_1 \wla_2 \varepsilon^2 + \gamma(\lambda_1 \wla_2 + \lambda_2 \wla_1)\varepsilon + \lambda_2 \wla_1}{\wla_2 \varepsilon^2 + \gamma (\wla_1+\wla_2)\varepsilon + \wla_1}.
\end{equation}

\item[{\rm (B)}]$\,$ Let $\gamma=0$.
If $\lambda_1>\lambda_2>0$ or $0>\lambda_2>\lambda_1$, then
$\min\limits _{\varepsilon \in \mathbb{R}} \rho(\wT(\varepsilon)) = \rho(\wT(0))$.

\item[{\rm (C)}]$\,$ Let $\gamma=0$,
If $\lambda_2>0>\lambda_1$ or $\lambda_1>0>\lambda_2$, then there exists an $\weps \neq 0$ such that
$\rho(\wT(\weps)) = |\lambda_3| = \min\limits_{\varepsilon \in \mathbb{R}} \rho(\wT(\varepsilon)) < \rho(\wT(0))$.

\item[{\rm (D)}]$\,$ Let $\gamma\neq 0$.
If $\lambda_1>\lambda_2>0$ or $0>\lambda_2>\lambda_1$, then
there exists an $\weps \neq 0$ such that $|\lambda(\weps,\gamma)|<|\lambda_2|$
and hence
$\rho(\wT(\weps)) = \max\{|\lambda(\weps,\gamma)|,|\lambda_3|\} < \rho(\wT(0))$.

\item[{\rm (E)}]$\,$ Let $\gamma\neq 0$.
If $\lambda_2>0>\lambda_1$ or $\lambda_1>0>\lambda_2$, then 
there exists an $\weps \neq 0$ such that
$\rho(\wT(\weps)) = |\lambda_3| = \min\limits _{\varepsilon \in \mathbb{R}} \rho(\wT(\varepsilon)) < \rho(\wT(0))$.

\end{itemize}
\end{theorem}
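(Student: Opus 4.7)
The plan is to apply Lemma~\ref{Ciaramella_mini_10_lemma:2} with $\wm=2$: since $V_c(\varepsilon)=\Span\{\bv_1+\varepsilon\bv_2\}\subseteq\Span\{\bv_1,\bv_2\}$, the hypothesis \eqref{Ciaramella_mini_10_eq:ass1} holds, and the block structure \eqref{Ciaramella_mini_10_eq:wTT} tells me that $\mathrm{spec}(\wT(\varepsilon))=\mathrm{spec}(\wT_{\wm}(\varepsilon))\cup\{\lambda_3,\dots,\lambda_n\}$. Because the $\lambda_j$ are sorted in descending magnitude, $\max_{j\ge 3}|\lambda_j|=|\lambda_3|$, so part (A) reduces to computing the two eigenvalues of the $2\times 2$ block $\wT_{\wm}(\varepsilon)$ and identifying one of them with $\lambda(\varepsilon,\gamma)$.

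The bulk of the work is in this computation. Plugging $P=\bv_1+\varepsilon\bv_2$ into $T=(I-P(RAP)^{-1}RA)G$ and using $\|\bv_j\|_2=1$, $\bv_1^\top\bv_2=\gamma$, and $A\bv_j=\wla_j\bv_j$, I obtain the scalar $RAP=\wla_2\varepsilon^2+\gamma(\wla_1+\wla_2)\varepsilon+\wla_1$, the inner products $R\bv_1=1+\varepsilon\gamma$ and $R\bv_2=\gamma+\varepsilon$, and hence $T\bv_j=\lambda_j\bv_j-\alpha_j(\bv_1+\varepsilon\bv_2)$ with $\alpha_j=\lambda_j\wla_j R\bv_j/(RAP)$. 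Reading off the four coefficients determines the entries of $\wT_{\wm}(\varepsilon)$. Two algebraic simplifications are then needed: that $\det\wT_{\wm}(\varepsilon)=0$, and that $\operatorname{tr}\wT_{\wm}(\varepsilon)$ equals the right-hand side of \eqref{Ciaramella_mini_10_thm:lam}. The vanishing of the determinant is structurally forced, because $T$ has kernel $G^{-1}V_c(\varepsilon)\subset\Span\{\bv_1,\bv_2\}$, so the restriction of $T$ to that invariant plane has rank one; the trace identity is a routine polynomial cancellation. Checking these two cancellations by hand is where I expect the most tedium; once done, (A) is immediate.

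Given the explicit formula \eqref{Ciaramella_mini_10_thm:lam}, parts (B) and (C) become one-variable calculus. Setting $\gamma=0$ and $t=\wla_2\varepsilon^2$, I rewrite $\lambda(\varepsilon,0)=(\lambda_1 t+\lambda_2\wla_1)/(t+\wla_1)$. Because $\wla_1\wla_2>0$, the admissible values of $t$ lie on a half-line that avoids the pole $t=-\wla_1$, and the M\"{o}bius-type map is monotone there with endpoints $\lambda_2$ (at $\varepsilon=0$) and $\lambda_1$ (as $|\varepsilon|\to\infty$). In case (B), $\lambda_1$ and $\lambda_2$ share sign, so $|\lambda(\varepsilon,0)|\ge|\lambda_2|$ for all $\varepsilon$, with equality only at $\varepsilon=0$; combined with $|\lambda_3|\le|\lambda_2|$, this gives $\min_\varepsilon\rho(\wT(\varepsilon))=\rho(\wT(0))$. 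In case (C), opposite-sign $\lambda_1,\lambda_2$ force $\lambda(\varepsilon,0)$ to cross zero, and solving $\lambda_1\wla_2\varepsilon^2+\lambda_2\wla_1=0$ gives $\weps^2=-\lambda_2\wla_1/(\lambda_1\wla_2)>0$, at which $\rho(\wT(\weps))=|\lambda_3|<|\lambda_2|$.

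For (D) and (E), $\gamma\neq 0$. A direct differentiation of \eqref{Ciaramella_mini_10_thm:lam} at $\varepsilon=0$ gives $\partial_\varepsilon\lambda(0,\gamma)=\gamma(\lambda_1-\lambda_2)\wla_2/\wla_1$, nonzero under either hypothesis. In case (D), $\lambda(0,\gamma)=\lambda_2\ne 0$, so choosing a small $\weps$ with sign opposite to that of $\lambda_2\,\partial_\varepsilon\lambda(0,\gamma)$ yields $|\lambda(\weps,\gamma)|<|\lambda_2|$ by continuity, and together with $|\lambda_3|<|\lambda_2|$ this gives $\rho(\wT(\weps))<\rho(\wT(0))$. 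In case (E), the numerator of $\lambda(\varepsilon,\gamma)$ is a quadratic in $\varepsilon$ with discriminant $\gamma^2(\lambda_1\wla_2+\lambda_2\wla_1)^2-4\lambda_1\lambda_2\wla_1\wla_2$, which is strictly positive since $\lambda_1\lambda_2<0$ and $\wla_1\wla_2>0$; a real root $\weps\neq 0$ exists, and after verifying that the denominator does not simultaneously vanish (a short sign-analysis under $\wla_1\wla_2>0$), one concludes $\lambda(\weps,\gamma)=0$ and again $\rho(\wT(\weps))=|\lambda_3|<|\lambda_2|=\rho(\wT(0))$.
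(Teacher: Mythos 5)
Your proposal is correct and follows essentially the same route as the paper's proof: reduction via Lemma \ref{Ciaramella_mini_10_lemma:2} (with $\wm=2$) to the $2\times 2$ block $\wT_{\wm}(\varepsilon)$, the explicit rational function $\lambda(\varepsilon,\gamma)$, and an elementary one-variable analysis of it for (B)--(E); your tactical variations (trace/determinant with the structural kernel argument instead of the characteristic polynomial, the M\"obius substitution $t=\wla_2\varepsilon^2$, and explicit roots via the discriminant in (C) and (E) instead of the paper's derivative-sign-plus-intermediate-value argument) are equivalent in substance. The one step you defer in (E) does hold: since $\wla_1\wla_2>0$, the denominator $\wla_2\varepsilon^2+\gamma(\wla_1+\wla_2)\varepsilon+\wla_1$ is sign-definite on the half-line $\gamma\varepsilon\geq 0$, while the numerator's two roots have product $\lambda_2\wla_1/(\lambda_1\wla_2)<0$ and are both nonzero, so one of them lies strictly inside that half-line and can be taken as $\weps$.
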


\begin{proof}
Since $m=1$, a direct calculation allows us to compute the matrix
$$\wT_{\wm}(\varepsilon)=\begin{bmatrix}
 \lambda_1 - \frac{\lambda_1\wla_1(1+\varepsilon \gamma)}{g} &  -\varepsilon \frac{\lambda_1\wla_1(1+\varepsilon \gamma)}{g} \\
- \frac{\lambda_2\wla_2(\varepsilon + \gamma)}{g} & \lambda_2 - \frac{(\varepsilon\lambda_2\wla_2)(\varepsilon + \gamma)}{g} \\
\end{bmatrix},$$ 
where $g=\wla_1 + \varepsilon \gamma[ \wla_1+\wla_2] + \varepsilon^2 \wla_2$.
The spectrum of this matrix is $\{0, \lambda(\varepsilon,\gamma)\}$,
with $\lambda(\varepsilon,\gamma)$ given in \eqref{Ciaramella_mini_10_thm:lam}.
Hence, point ${\rm (A)}$ follows recalling \eqref{Ciaramella_mini_10_thm:TTTT}.

To prove points ${\rm (B)}$, ${\rm (C)}$, ${\rm (D)}$ and ${\rm (E)}$ we use some properties
of the map $\varepsilon \mapsto \lambda(\varepsilon,\gamma)$. First, we notice that 
\begin{equation}\label{Ciaramella_mini_10_thm:prop}
\lambda(0,\gamma)=\lambda_2, \; \lim_{\varepsilon \rightarrow \pm \infty} \lambda(\varepsilon,\gamma) = \lambda_1,
\; \lambda(\varepsilon,\gamma)=\lambda(-\varepsilon,-\gamma).
\end{equation}
Second, the derivative of $\lambda(\varepsilon,\gamma)$ with respect to $\varepsilon$ is
\begin{equation}\label{Ciaramella_mini_10_thm:der}
\frac{d \lambda(\varepsilon,\gamma)}{d \varepsilon}
= \frac{(\lambda_1-\lambda_2)\wla_1\wla_2(\varepsilon^2+2\varepsilon/\gamma+1)\gamma}{(\wla_2 \varepsilon^2+\gamma(\wla_1+\wla_2)\varepsilon+\wla_1)^2}.
\end{equation}
Because of $\lambda(\varepsilon,\gamma)=\lambda(-\varepsilon,-\gamma)$
in \eqref{Ciaramella_mini_10_thm:prop}, we can assume without loss of generality that
$\gamma \geq 0$.

Let us now consider the case $\gamma=0$. In this case, the derivative \eqref{Ciaramella_mini_10_thm:der}
becomes $\frac{d \lambda(\varepsilon,0)}{d \varepsilon}
= \frac{(\lambda_1-\lambda_2)\wla_1\wla_2 2\varepsilon}{(\wla_2 \varepsilon^2+\wla_1^2)^2}$.
Moreover, since $\lambda(\varepsilon,0)=\lambda(-\varepsilon,0)$ we can assume that $\varepsilon \geq 0$.

Case ${\rm (B)}$.
If $\lambda_1>\lambda_2>0$, then $\frac{d \lambda(\varepsilon,0)}{d \varepsilon}>0$ for all $\varepsilon>0$.
Hence, $\varepsilon \mapsto \lambda(\varepsilon,0)$ is monotonically increasing, $\lambda(\varepsilon,0) \geq 0$ for all $\varepsilon>0$ and, thus, the minimum of $\varepsilon \mapsto |\lambda(\varepsilon,0)|$
is attained at $\varepsilon = 0$ with $|\lambda(0,0)|=|\lambda_2|>|\lambda_3|$, and the result follows.
Analogously, if $0>\lambda_2>\lambda_1$, then $\frac{d \lambda(\varepsilon,0)}{d \varepsilon}<0$ 
for all $\varepsilon>0$.
Hence, $\varepsilon \mapsto \lambda(\varepsilon,0)$ is monotonically decreasing, 
$\lambda(\varepsilon,0) < 0$ for all $\varepsilon>0$ and
the minimum of $\varepsilon \mapsto |\lambda(\varepsilon,0)|$
is attained at $\varepsilon = 0$.

Case ${\rm (C)}$.
If $\lambda_1>0>\lambda_2$, then $\frac{d \lambda(\varepsilon,0)}{d \varepsilon}>0$
for all $\varepsilon >0$. Hence, $\varepsilon \mapsto \lambda(\varepsilon,0)$ is monotonically 
increasing and such that $\lambda(0,0)=\lambda_2<0$ and $\lim_{\varepsilon \rightarrow \infty} \lambda(\varepsilon,0) = \lambda_1>0$. Thus, the continuity of the map $\varepsilon \mapsto \lambda(\varepsilon,0)$
guarantees the existence of an $\weps>0$ such that $\lambda(\weps,0)=0$.
Analogously, if $\lambda_2>0>\lambda_1$, then $\frac{d \lambda(\varepsilon,0)}{d \varepsilon}<0$
for all $\varepsilon>0$ and the result follows by the continuity of $\varepsilon \mapsto \lambda(\varepsilon,0)$.

Let us now consider the case $\gamma>0$. The sign of 
$\frac{d \lambda(\varepsilon,\gamma)}{d \varepsilon}$ is affected by the term
$f(\varepsilon):=\varepsilon^2+2\varepsilon/\gamma+1$, which appears at the numerator
of \eqref{Ciaramella_mini_10_thm:der}.
The function $f(\varepsilon)$ is strictly convex, attains its minimum
at $\varepsilon=-\frac{1}{\gamma}$, and is negative in $(\bareps_1,\bareps_2)$
and positive in $(-\infty,\bareps_1)\cup(\bareps_2,\infty)$, with $\bareps_1,\bareps_2=-\frac{1\mp \sqrt{1-\gamma^2}}{\gamma}$.

Case ${\rm (D)}$.
If $\lambda_1>\lambda_2>0$, then $\frac{d \lambda(\varepsilon,\gamma)}{d \varepsilon}>0$ for all 
$\varepsilon > \bareps_2$. Hence, $\frac{d \lambda(0,\gamma)}{d \varepsilon}>0$, which means that
there exists an $\weps<0$ such that $|\lambda(\weps,\gamma)|<|\lambda(0,\gamma)|=|\lambda_2|$.
The case $0>\lambda_2>\lambda_1$ follows analogously.

Case ${\rm (E)}$.
If $\lambda_1>0>\lambda_2$, then $\frac{d \lambda(\varepsilon,\gamma)}{d \varepsilon}>0$
for all $\varepsilon>0$. Hence, by the continuity of $\varepsilon \mapsto \lambda(\varepsilon,\gamma)$
(for $\varepsilon\geq 0$) there exists an $\weps>0$ such that $\lambda(\weps,\gamma)=0$.
The case $\lambda_2>0>\lambda_1$ follows analogously.
\end{proof}

Theorem \ref{Ciaramella_mini_10_thm:perturb} and its proof say that, if the two eigenvalues
$\lambda_1$ and $\lambda_2$ have opposite signs (but they could be equal in modulus),
then it is always possible to find an $\varepsilon \neq 0$ such that the 
coarse space $V_c := \Span\{ \bv_1 + \varepsilon \bv_2 \}$
leads to a faster method than $V_c := \Span\{ \bv_1 \}$, even though both are one-dimensional subspaces. 
In addition, if $\lambda_3 \neq 0$ the former leads to
a two-level operator $T$ with a larger kernel than the one corresponding to the latter.
The situation is completely different if
$\lambda_1$ and $\lambda_2$ have the same sign. In this case, the orthogonality parameter
$\gamma$ is crucial. If $\bv_1$ and $\bv_2$ are orthogonal ($\gamma=0$), then 
one cannot improve the effect of $V_c:= \Span\{ \bv_1 \}$ by a simple perturbation
using $\bv_2$. However, if $\bv_1$ and $\bv_2$ are not orthogonal ($\gamma \neq 0$),
then one can still find an $\varepsilon \neq 0$ such that $\rho(\wT(\varepsilon)) < \rho(\wT(0))$. 

Notice that, if $|\lambda_3|=|\lambda_2|$, Theorem \ref{Ciaramella_mini_10_thm:perturb} shows that one cannot obtain a $\rho(T)$ smaller than $|\lambda_2|$ using a one-dimensional perturbation. However, if one optimizes the entire coarse space $V_c$ (keeping $m$ fixed),
then one can find coarse spaces leading to better contraction factor of the two-level
iteration, even though $|\lambda_3|=|\lambda_2|$. This is shown in the next section.

\section{Optimizing the coarse-space functions}\label{Ciaramella_mini_10_sec:opt}
\vspace*{-4mm}

Consider the elliptic problem

\vspace*{-2mm}
\begin{equation}\label{Ciaramella_mini_10_eq:elliptic}
- \Delta u + c \, (\partial_x u + \partial_y u) = f \; \text{ in $\Omega=(0,1)^2$},\quad 
u = 0 \; \text{ on $\partial \Omega$}.
\end{equation}
\vspace*{-2mm}

\noindent
Using a uniform grid of size $h$,
the standard second-order finite-difference scheme for the Laplace operator
and the central difference approximation for the advection terms,
problem \eqref{Ciaramella_mini_10_eq:elliptic} becomes $A \bu = \bff$,
where $A$ has constant and positive diagonal entries, $D=\Diag(A)=4/h^2 I$.
A simple calculation
shows that, if $c\geq 0$ satisfies $c\leq 2/h$,
then the eigenvalues of $A$ are real. The eigenvectors of $A$ are
orthogonal if $c=0$ and non-orthogonal if $c>0$.

One of the most used smoothers for \eqref{Ciaramella_mini_10_eq:elliptic}
is the damped Jacobi method: $\bu^{k+1} = \bu^k + \omega D^{-1}( \bff - A \bu^k)$,
where $\omega \in (0,1]$ is a damping parameter.
The corresponding iteration matrix is $G=I-\omega D^{-1} A$.
Since $D=4/h^2 I$, the matrices $A$ and $G$ have the same eigenvectors.
For $c=0$, it is possible to show that, if $\omega=1$ (classical Jacobi iteration),
then the nonzero eigenvalues of $G$ have positive and negative signs,
while if $\omega=1/2$, the eigenvalues of $G$ are all positive.
Hence, the chosen model problem allows us to work in the theoretical framework 
of Section \ref{Ciaramella_mini_10_sec:pert}.

To validate numerically Theorem \ref{Ciaramella_mini_10_thm:perturb}, we set $h=1/10$ and consider $V_c:=\left\{\bv_1+\varepsilon \bv_2 \right\}$. Figure \ref{Ciaramella_mini_10_eq:validate_thm} shows 
the dependence of $\rho(T(\varepsilon))$ and $|\lambda(\varepsilon,\gamma)|$ on $\varepsilon$ and $\gamma$. On the top left panel, we set $c=0$ and $\omega=1/2$ so that the hypotheses of point (B) of Theorem \ref{Ciaramella_mini_10_thm:perturb} are satisfied, since $\gamma=0$ and $\lambda_1\geq \lambda_2>0$. As point (B) predicts, we observe that $\min\limits_{\varepsilon\in\mathbb{R}}\rho(T(\varepsilon))$ is attained at $\varepsilon=0$, i.e. $\min_{\varepsilon\in\mathbb{R}}\rho(T(\varepsilon))=\rho(T(0))=\lambda_2$. Hence, adding a perturbation does not improve the coarse space made only by $\bv_1$. Next, we consider point (C), by setting $c=0$ and $\omega=1$. Through a direct computation we get $\lambda_1=-0.95$, $\lambda_2=-\lambda_1$ and $\lambda_3=0.90$. The top-right panel shows, on the one hand, that for several values of $\varepsilon$, $\rho(T(\varepsilon))=\lambda_3<\lambda_2$, that is with a one-dimensional perturbed coarse space, we obtain the same contraction factor we would have with the two-dimensional spectral coarse space $V_c=\text{span}\left\{\bv_1,\bv_2\right\}$. On the other hand, we observe that there are two values of $\varepsilon$
such that $\rho(\widetilde{T}_{\widetilde{m}}(\varepsilon))=0$, which (recalling \eqref{Ciaramella_mini_10_eq:wT} and \eqref{Ciaramella_mini_10_eq:wTT})
implies that $T$ is nilpotent over the $\Span\{\bv_1,\bv_2\}$.
To study point (D), we set $c=10$, $\omega=1/2$, which lead to $\lambda_1=0.92$, $\lambda_2=\lambda_3=0.90$. The left-bottom panel confirms there exists an $\varepsilon^*<0$ such that $|\lambda(\varepsilon^*,\gamma)|\leq \lambda_2$, which implies $\rho(T(\varepsilon^*))\leq \lambda_2$.
Finally, we set $c=10$ and $\omega=1$. Point (E) is confirmed by the right-bottom panel, which shows that $|\lambda(\varepsilon,\gamma)|<|\lambda_2|$, and thus $\min_{\varepsilon}\rho(T(\varepsilon))=|\lambda_3|$, for some values of $\varepsilon$.

\begin{figure}[t]
\centering
\includegraphics[scale=0.32]{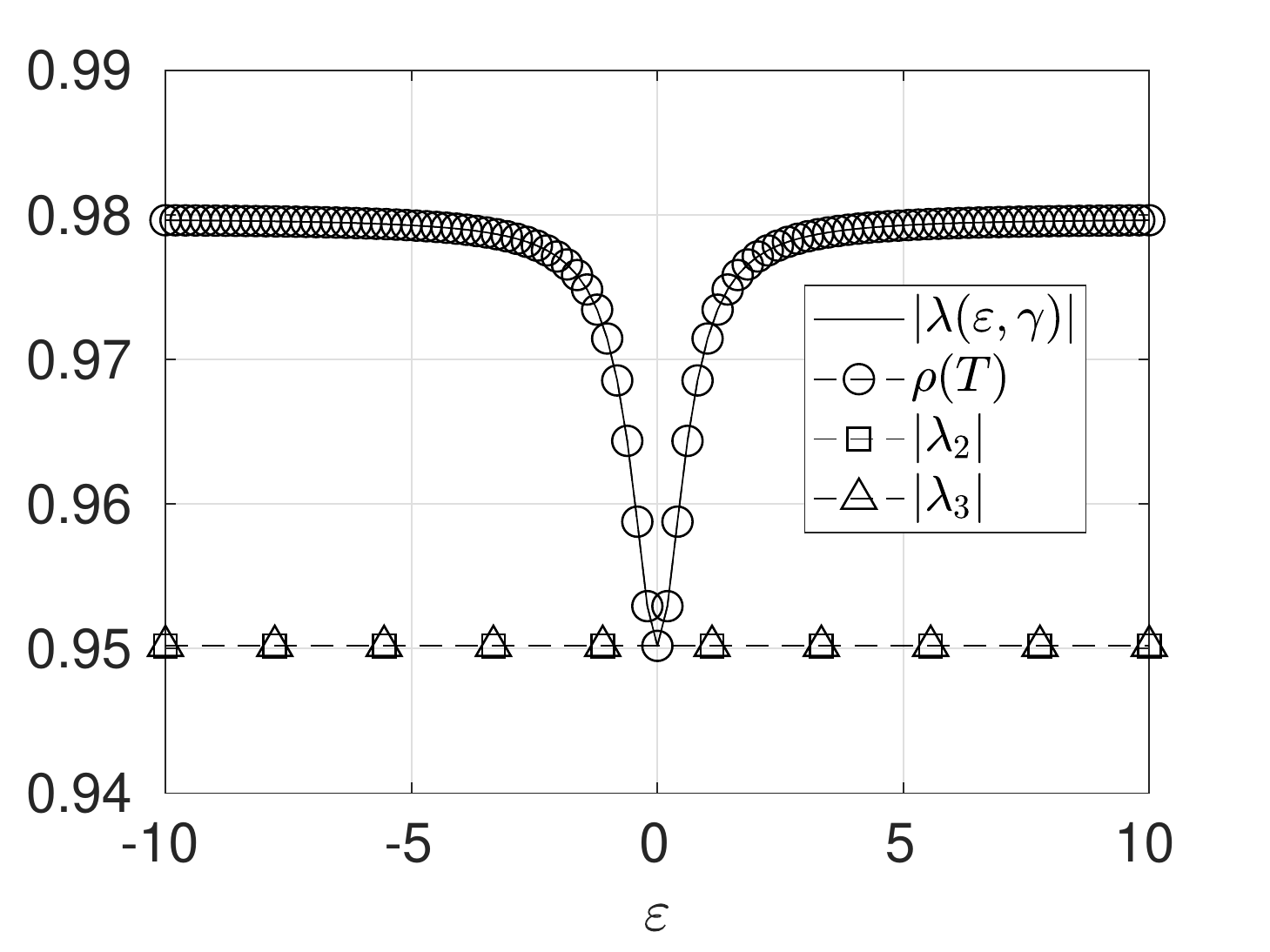}
\includegraphics[scale=0.32]{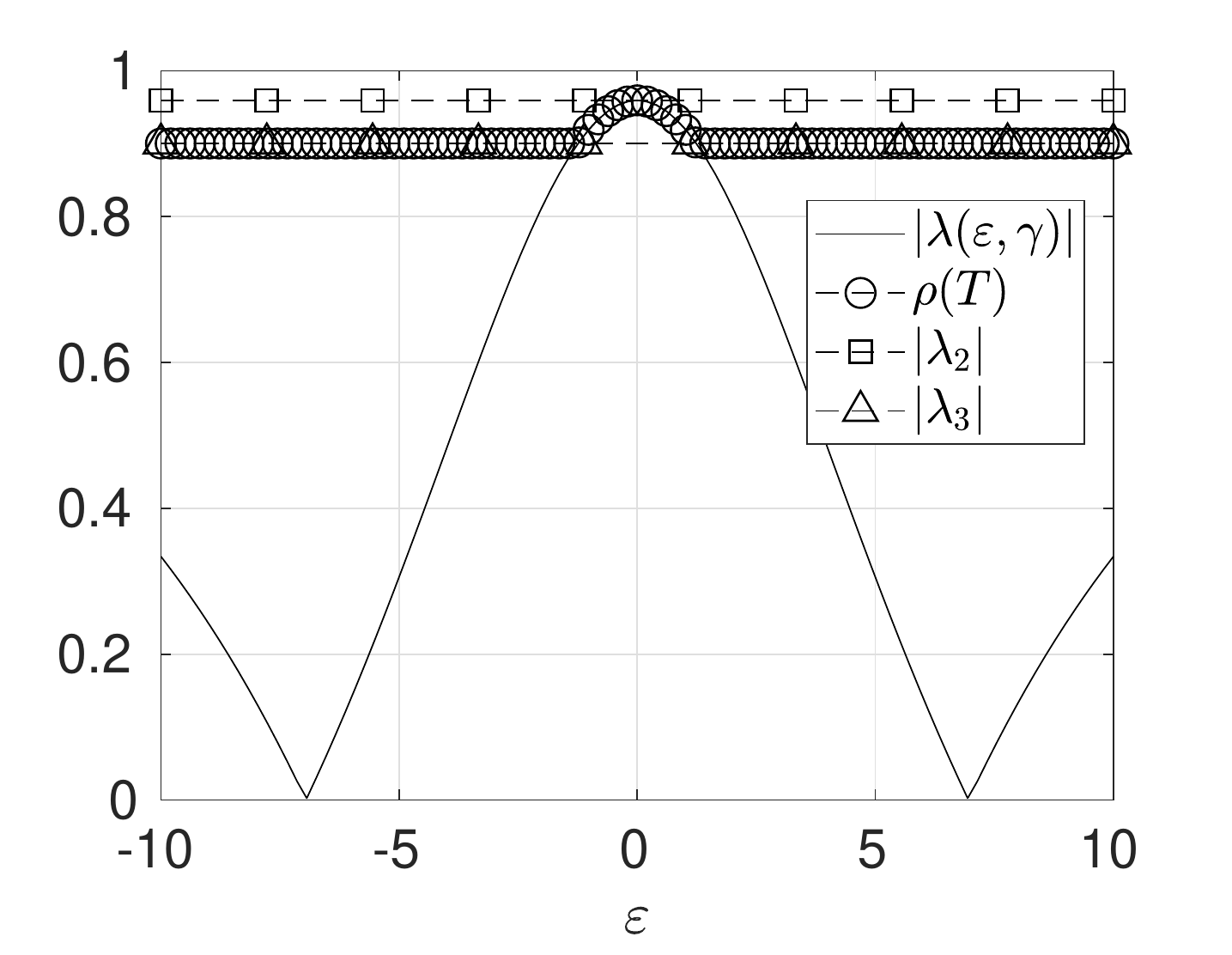}
\includegraphics[scale=0.32]{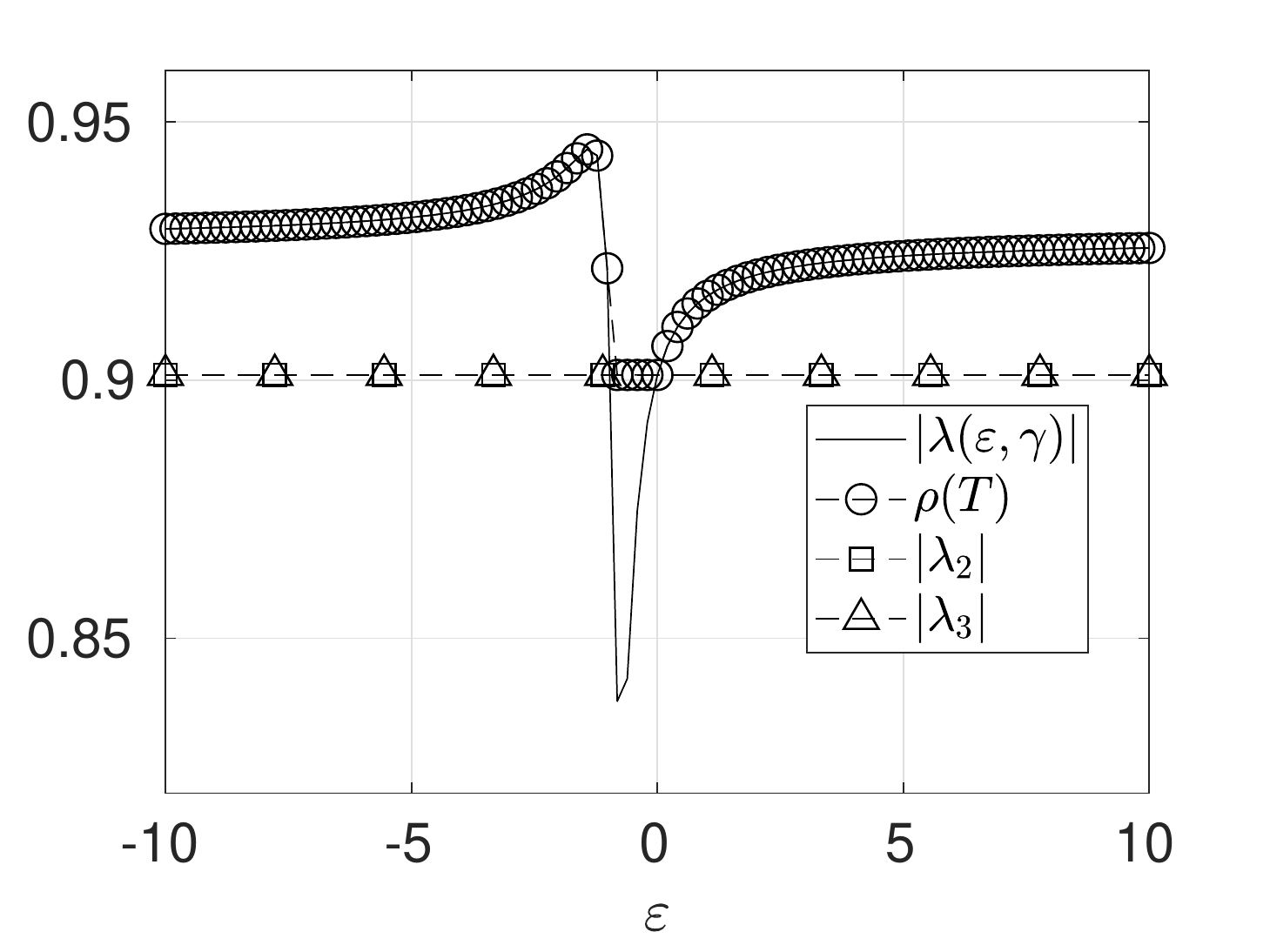}
\includegraphics[scale=0.32]{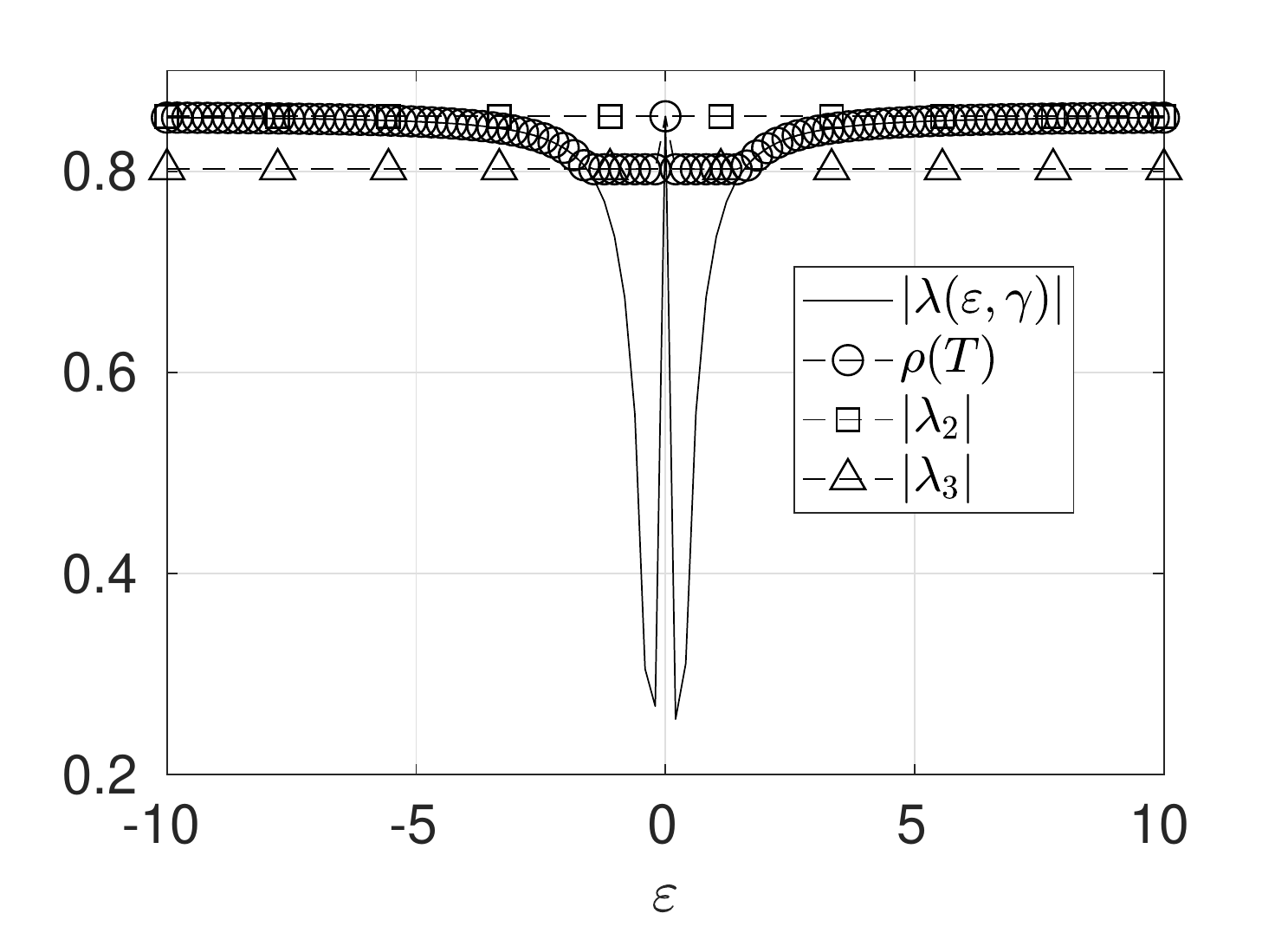}
\caption{Behavior of $|\lambda(\varepsilon,\gamma)|$ and $\rho(T(\varepsilon))$ as functions of $\varepsilon$ for different $c$ and $\gamma$.}\label{Ciaramella_mini_10_eq:validate_thm}
\end{figure}

We have shown both theoretically and numerically that the spectral coarse space
is not necessarily the one-dimensional coarse space minimizing $\rho(T)$. 
Now, we wish to go beyond this one-dimensional analysis and optimize
the entire coarse space $V_c$ keeping its dimension $m$ fixed). This is equivalent to
optimize the prolongation operator $P$ whose columns span $V_c$. 
Thus, we consider the optimization problem
\begin{equation}\label{Ciaramella_mini_10_eq:optimization_problem}
\min_{P \in\mathbb{R}^{n\times m}} \rho(T(P)).
\end{equation}
To solve approximately \eqref{Ciaramella_mini_10_eq:optimization_problem}, we follow 
the approach proposed by \cite{katrutsa2017deep}.
Due to the Gelfand formula $\rho(T)=\lim_{k\rightarrow \infty} \sqrt[k]{\|T^k\|_F}$, we replace \eqref{Ciaramella_mini_10_eq:optimization_problem} with the simpler optimization problem $\min_{P} \|T(P)^k\|^2_F$ for some positive $k$. Here, $\|\cdot\|_F$ is the Frobenius norm. We then consider the unbiased stochastic estimator \cite{hutchinson1989stochastic}
\[\|T^k\|^2_F=\text{trace}\left((T^k)^\top T^k\right)=\mathbb{E}_{\mathbf{z}}\left[ \mathbf{z}^\top (T^k)^\top T^k  \mathbf{z}\right]=\mathbb{E}_{\mathbf{z}}\left[ \|T^k \mathbf{z}\|^2_2\right] ,\]
where $\mathbf{z}\in\mathbb{R}^n$ is a random vector with Rademacher distribution, i.e. $\mathbb{P}(\mathbf{z}_i=\pm 1)=1/2$. Finally, we rely on a sample average approach, replacing the unbiased stochastic estimator with its empirical mean such that \eqref{Ciaramella_mini_10_eq:optimization_problem} is approximated by
\begin{equation}\label{Ciaramella_mini_10_eq:optimization_problem_emp}
\min_{P \in\mathbb{R}^{n\times m}} \frac{1}{N}\sum_{i=1}^N\|T(P)^k \mathbf{z}_i\|^2_F, 
\end{equation}
where $\mathbf{z}_i$ are a set of independent, Rademacher distributed, random vectors.
The action of $T$ onto the vectors $\mathbf{z}_i$ can be interpreted as the feed-forward process of a neural net, where each layer represents one specific step of the two-level method, that is the smoothing step, the residual computation, the coarse correction and the prolongation/restriction operations. In our setting, the weights of most layers are fixed and given, and the optimization is performed only on the weights of the layer representing the prolongation step. The restriction layer is constraint to have as weights the transpose of the weights of the prolongation layer.

We solve \eqref{Ciaramella_mini_10_eq:optimization_problem_emp}
for $k=10$ and $N=n$ using Tensorflow
\cite{tensorflow2015-whitepaper} and its stochastic gradient descend algorithm 
with learning parameter 0.1.
The weights of the prolongation layer are initialized with an uniform distribution. 
Table \ref{Ciaramella_mini_10_eq:Tab} reports both $\rho(T(P))$ and $\|T(P)\|_A$ using a spectral 
coarse space and the coarse space obtained solving \eqref{Ciaramella_mini_10_eq:optimization_problem_emp}.
\begin{table}[t]
\centering
\def\arraystretch{1.2}
\setlength{\tabcolsep}{5pt}
\begin{tabular}{ l |c c | c c c  c}
& $c$ & $\omega$ & $m=1$ & $m =5$ & $m=10$ & $m=15$\\ 
\hline
\parbox[b][-24pt][c]{8pt}{\rotatebox[origin=c]{90}{$\rho(T)$}} & 0 & 1/2 & 0.95 - 0.95 & 0.90 - 0.90 & 0.82 - 0.83 & 0.76 - 0.78 \\
& 0 & 1 & 0.95 - 0.90 & 0.90 - 0.80 & 0.80 - 0.65 & 0.74 - 0.53 \\
& 10 & 1/2 & 0.90 - 0.90  & 0.85 - 0.82  & 0.79 - 0.74 & 0.73 - 0.68 \\
& 10 & 1 & 0.85 - 0.80 & 0.80 - 0.67 & 0.71 - 0.55 & 0.66 - 0.37 \\ \hline
\parbox[b][-7pt][c]{8pt}{\rotatebox[origin=c]{90}{$\|T\|_{A}$}} 
& 0 & 1/2 & 0.95 - 0.95 & 0.90 - 0.90 & 0.82 - 0.84 & 0.76 - 0.77 \\
& 0 & 1 & 0.95 - 0.95 & 0.90 - 0.94 & 0.80 - 0.88 & 0.74 - 0.88 \\ \hline
\parbox[b][-8pt][c]{8pt}{\rotatebox[origin=c]{90}{$\kappa_2$}} 
& 0 & 1 & 46.91 - 29.45 & 18.48 - 14.40 & 9.37 - 8.22 & 6.69 - 8.53 \\
& 10 & 1 & 27.25 - 23.98 & 22.44 - 12.36 & 17.34 - 11.35 & 13.06 - 9.71 \\
\end{tabular}
\caption{Values of $\rho(T)$, $\|T\|_{A}$ and condition number $\kappa_2$ of
the matrix $A$ preconditioned by the two-level method for different
 $c$ and $\omega$ and using either a spectral coarse space (left number), or the coarse space obtained solving \eqref{Ciaramella_mini_10_eq:optimization_problem_emp} (right number).}\label{Ciaramella_mini_10_eq:Tab}
\end{table}
We can clearly see that there exist coarse spaces, hence matrices $P$, 
corresponding to values of the asymptotic convergence factor $\rho(T(P))$
much smaller than the ones obtained by spectral coarse spaces. 
Hence, Table \ref{Ciaramella_mini_10_eq:Tab} confirms that a spectral coarse space of dimension $m$ 
is not necessarily a (global) minimizer for $\min\limits_{P \in \mathbb{R}^{n\times m}}\rho(T(P))$.
This can be observed not only in the case $c=0$, for which the result of \cite[Theorem 5.5]{xu_zikatanov_2017} 
states that (recall that $M$ is symmetric) the spectral coarse space minimizes $\|T(P)\|_A$,
but also for $c> 0$, which corresponds to a nonsymmetric $A$.
Interestingly, the coarse spaces obtained by our numerical optimizations
lead to preconditioned matrices with better condition numbers,
as shown in the last row of Table \ref{Ciaramella_mini_10_eq:Tab},
where the condition number $\kappa_2$ of the matrix $A$ preconditioned by the two-level method
(and different coarse spaces) is reported.



	
	\bibliographystyle{plain}
	\bibliography{Ciaramella_mini_10}
\end{document}